\documentclass{amsart}
\usepackage{amsmath}
\usepackage{amssymb}
\usepackage{amscd}
\usepackage[mathscr]{euscript}
\usepackage{xypic}
\xyoption{all}

\usepackage{url}
\usepackage{listings}
\lstdefinelanguage{GAP}{%
  morekeywords={%
    Assert,Info,IsBound,QUIT,%
    TryNextMethod,Unbind,and,break,%
    continue,do,elif,%
    else,end,false,fi,for,%
    function,if,in,local,%
    mod,not,od,or,%
    quit,rec,repeat,return,%
    then,true,until,while%
  },%
  sensitive,%
  morecomment=[l]\#,%
  morestring=[b]",%
  morestring=[b]',%
}[keywords,comments,strings]
\usepackage[T1]{fontenc}
\usepackage[variablett]{lmodern}
\usepackage{xcolor}
\usepackage{cancel}
\lstset{
  basicstyle=\ttfamily,
  keywordstyle=\color{black},
  stringstyle=\color{blue},
  commentstyle=\color{green!70!black},
  columns=fullflexible,
}

\usepackage{hyperref} 
	\hypersetup{backref=true,       
	    pagebackref=true,               
	    hyperindex=true,                
	    colorlinks=true,                
	    breaklinks=true,                
	    urlcolor= black,                
	    linkcolor= black,                
	    bookmarks=true,                 
	    bookmarksopen=false,
	    filecolor=black,
	    citecolor=black,
	    linkbordercolor=red
	}



\title[Commutator length of compact Lie groups]{On the commutator length of compact Lie groups }


\author{Juan Omar G\'omez}
\thanks{}
\address{Fakult\"at f\"ur Mathematik,
Universit\"at Bielefeld, D-33501 Bielefeld, Germany.}
\email{jgomez@math.uni-bielefeld.de}

\author{Victor Torres-Castillo}
\thanks{}
\address{Centro de Investigaci\'on en Matem\'aticas, Unidad M\'erida, PCTY, Carretera Sierra Papacal--Chuburn\'a Puerto Km 5.5, Sierra Papacal, M\'erida, YUC 97302, Mexico.}
\email{victor.torres@cimat.mx}

\author{Bernardo Villarreal}
\thanks{}
\address{Centro de Investigaci\'on en Matem\'aticas, Unidad M\'erida, PCTY, Carretera Sierra Papacal--Chuburn\'a Puerto Km 5.5, Sierra Papacal, M\'erida, YUC 97302, Mexico.}
\email{bernardo.villarreal@cimat.mx}


\newcommand{\comments}[1]{}

\def \N{{\mathbb N}}

\newtheorem{theorem}{Theorem}
\newtheorem{lemma}[theorem]{Lemma}

\newtheorem{proposition}[theorem]{Proposition}
\newtheorem{corollary}[theorem]{Corollary}

\theoremstyle{definition}
\newtheorem{definition}[theorem]{Definition}

\newtheorem{remark}[theorem]{Remark}
 
\keywords{Commutator length, commutator subgroup, compact Lie groups, classifying space for commutativity. }

\subjclass[2020]{Primary: 22E99; Secondary: 55R35.} 
\thanks{JOG and VT were supported by CONAHCYT under the program `Becas Nacionales de Posgrado'.}
\date{\today}

\AtBeginDocument{\hypersetup{pdfborder={0 0 1}}}


\begin{document}


\maketitle


\begin{abstract}
In this short note we show that the path-connected component of the identity of the derived subgroup of a compact Lie group consists just of commutators.  We also discuss an application of our main result to the homotopy type of the classifying space for commutativity for a compact Lie group whose path-connected component of the identity is abelian.  
\end{abstract}

\section*{Introduction}
\label{sec : Introduction}

Let $G$ be a group, $[G,G]$ its derived subgroup and $x$ an element in $[G,G]$. The \textit{commutator length of $x$} is defined as the minimum number of commutators needed to write $x$ as a product of commutators, and is denoted by $\mathrm{cl}(x)$. It is well known that $\mathrm{cl}(x)$ is not necessarily one. This invariant has been extensively studied in Lie groups, and there are some remarkable results that guaranty that in many cases any element in the derived subgroup is in fact a commutator (see \cite{Dok86}, \cite{Got49}, \cite{PW62}). For instance, from work of M. Goto \cite{Got49} it follows that in a compact connected  Lie group $G$, the derived subgroup of $G$ consists just of commutators. But for non-compact connected Lie groups, the result no longer holds, for example the negative identity matrix in $\mathrm{SL}_2(\mathbb{R})$ is not a commutator. 

In this short note we investigate this invariant for compact Lie groups that are not necessarily connected. Let $G_0$ denote the path-connected component of the identity in $G$.

\begin{theorem}\label{main}
Let $G$ be a compact Lie group. Then any element in $[G,G]_0$ is a commutator, that is, $\mathrm{cl}(x)=1$ for every $x\in [G,G]_0$.
\end{theorem}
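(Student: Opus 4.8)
The plan is to reduce everything to a maximal torus and to manufacture the commutator out of elements coming from the different components of $G$, using an enlarged Weyl group. I would begin with the structural decomposition $\mathrm{Lie}(G_0)=\mathfrak{z}\oplus\mathfrak{g}_{\mathrm{ss}}$ into centre and semisimple part; both summands are $\operatorname{Ad}(G)$-invariant, so $G_0=Z\cdot G_{\mathrm{ss}}$ with $Z=Z(G_0)_0$ a torus and $G_{\mathrm{ss}}=[G_0,G_0]$ connected semisimple, and $\pi_0(G)$ acts on each factor. Differentiating the maps $h\mapsto[g,h]$ at the identity shows that $\mathrm{Lie}([G,G]_0)=\mathfrak{g}_{\mathrm{ss}}\oplus\mathfrak{z}_{\mathrm{aug}}$, where $\mathfrak{z}_{\mathrm{aug}}=\sum_{\gamma\in\pi_0(G)}\im(\gamma-1)\subseteq\mathfrak{z}$ is the augmentation subspace; hence $[G,G]_0=G_{\mathrm{ss}}\cdot Z_{\mathrm{aug}}$ with $Z_{\mathrm{aug}}=\exp(\mathfrak{z}_{\mathrm{aug}})$. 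So it is enough to realise an arbitrary element of $G_{\mathrm{ss}}\cdot Z_{\mathrm{aug}}$ as a single commutator.

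Fix a maximal torus $T$ of $G_0$, chosen (after a harmless $G_0$-conjugation) to contain the target $x$ inside $[G,G]_0$; since conjugating a commutator gives a commutator, I may assume $x\in T\cap[G,G]_0$ and in fact $x\in\exp(\mathfrak{t}_{\mathrm{ss}}\oplus\mathfrak{z}_{\mathrm{aug}})$, where $\mathfrak{t}=\mathfrak{z}\oplus\mathfrak{t}_{\mathrm{ss}}$. Because $G_0$ acts transitively on its maximal tori, every component of $G$ meets $N_G(T)$, so $\widetilde{W}:=N_G(T)/T$ surjects onto $\pi_0(G)$ with kernel the ordinary Weyl group $W$; each $w\in\widetilde{W}$ acts on $\mathfrak{t}$ preserving $\mathfrak{z}$ and $\mathfrak{t}_{\mathrm{ss}}$. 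For $n$ representing $w$ and $s\in T$ one has $[n,s]=w(s)s^{-1}$, with image $\exp(\im(w-1))$; and a direct computation of $[n_1s_1,n_2s_2]$ shows that, as $s_1,s_2$ range over $T$, its values fill a translate of $\exp(\im(w_1-1)+\im(w_2-1))$ by the fixed element $[n_1,n_2]$ (after applying the $\widetilde{W}$-action, which preserves dimensions).

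These observations split the problem in two. On $\mathfrak{t}_{\mathrm{ss}}$ I would take $w_1$ to act as a Coxeter-type element of $W$, which has no eigenvalue $1$; then $w_1-1$ is invertible on $\mathfrak{t}_{\mathrm{ss}}$ and the twisted map already sweeps out the semisimple Cartan directions, reproving Goto's theorem for $G_{\mathrm{ss}}$. The main obstacle is the central factor: a single $w$ contributes only $\im(w-1)|_{\mathfrak{z}}$, which is typically a proper subtorus of $Z_{\mathrm{aug}}$, so one is forced to spend both entries of the commutator and rely on the sum $\im(w_1-1)+\im(w_2-1)$ produced by $[n_1s_1,n_2s_2]$. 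The crux is therefore to produce a pair $w_1,w_2\in\widetilde{W}$ that commute (so that $[n_1,n_2]$, and hence the whole commutator, lands in $T$), that act without eigenvalue $1$ on $\mathfrak{t}_{\mathrm{ss}}$, and that satisfy $\im(w_1-1)|_{\mathfrak{z}}+\im(w_2-1)|_{\mathfrak{z}}=\mathfrak{z}_{\mathrm{aug}}$; given such a pair, solving the resulting equations on the torus for $s_1,s_2$ writes $x$ as one commutator. I expect the construction of this pair to be the heart of the argument, since simultaneously meeting the Coxeter condition on the semisimple part and the covering condition on the centre is exactly where the passage from connected to disconnected $G$ becomes delicate.
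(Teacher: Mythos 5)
Your proposal is not a complete proof: the step you yourself call the heart of the argument --- producing commuting $w_1,w_2\in\widetilde{W}$ with no eigenvalue $1$ on $\mathfrak{t}_{\mathrm{ss}}$ and with $\im(w_1-1)|_{\mathfrak{z}}+\im(w_2-1)|_{\mathfrak{z}}=\mathfrak{z}_{\mathrm{aug}}$ --- is left as an expectation rather than a construction, and there is a second unaddressed point: the commutators $[n_1s_1,n_2s_2]$ fill a \emph{translate} of $\exp\bigl(\im(w_1-1)+\im(w_2-1)\bigr)$ by the element $[n_1,n_2]$, so you would additionally need that element to lie in this subtorus before you can solve for $s_1,s_2$.

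The gap is in fact unfillable, and your own bookkeeping, pushed one step further, shows why. Since the action of $\widetilde{W}$ on $\mathfrak{z}$ factors through $\pi_0(G)$, your condition on the centre asks for two elements $\gamma_1,\gamma_2\in\pi_0(G)$ with $\im(\gamma_1-1)+\im(\gamma_2-1)=\sum_{\gamma}\im(\gamma-1)$ on $\mathfrak{z}$; equivalently (choosing an invariant metric), the fixed space of $\langle\gamma_1,\gamma_2\rangle$ on $\mathfrak z$ must equal that of all of $\pi_0(G)$. This fails in general. Let $Q=(\Z/2)^3$ act on $T=(S^1)^7$, one circle for each nontrivial character $\chi\colon Q\to\{\pm1\}$, with $q$ acting on the $\chi$-th circle by $z\mapsto z^{\chi(q)}$, and set $G=T\rtimes Q$. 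Here $G_0=T$ is abelian and normal, so $N_G(T)=G$ and \emph{every} commutator in $G$ has your form $[n_1s_1,n_2s_2]$. Two elements never generate $(\Z/2)^3$, so for any $\gamma_1,\gamma_2$ there is a nontrivial $\chi$ with $\chi(\gamma_1)=\chi(\gamma_2)=1$; projecting onto $S^1_\chi\rtimes Q$ (a group homomorphism), the images of the two entries commute, so the $\chi$-th coordinate of any single commutator is trivial. On the other hand each $\chi$ is nontrivial on some $q$, so $[G,G]=[G,G]_0=(S^1)^7$. Hence any element of $[G,G]_0$ whose seven coordinates all have infinite order is \emph{not} a commutator (its commutator length is $2$). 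So no choice of pair can work --- this $G$ contradicts the statement of Theorem~\ref{main} itself, and also Theorem~\ref{thm 1}. For comparison, the paper argues by reducing, via Borel--Serre, to extensions $T\rtimes Q$ and then running a density argument; its pivotal Claim (that $r[q,t]$ with $r$ torsion equals some $[q,t']$) fails for the $G$ above --- take $r$ with a nontrivial coordinate on a circle fixed by $q$ --- the flaw being the step ``since $[q,t^N]$ has infinite order, $[q,-]$ must cover $S$,'' which is only valid when the $1$-parameter subgroup $S$ has one-dimensional closure, i.e.\ when $r[q,t]$ lies on a circle subgroup. So the delicacy you flagged in passing from connected to disconnected groups is not a technical hurdle for your method to clear; it is a genuine obstruction, visible precisely because your framework isolates the central directions $\im(\gamma-1)|_{\mathfrak z}$ that a single commutator can reach.
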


We would like to emphasize that in general the group $[G,G]_0$ is not the derived subgroup of a compact connected Lie group, as $G_0$ may not be semi-simple, but it does contain the derived subgroup $[G_0,G_0]$. Moreover, if $G$ is a compact Lie group and $\pi_0(G)$ contains elements of commutative length greater than 1, then so does $G$ (see Remark \ref{Remark sobre clG mayor que uno}). Hence Theorem \ref{main} is the \textit{best} generalization of Goto's result in the compact setting.

Our study of the commutator length of elements in $[G,G]_0$ was mainly motivated by some results in \cite{Villarreal2023} concerning the second homotopy group of the classifying space for commutativity $B(2,G)$ of a compact Lie group $G$.  The space $B(2,G)$ is a variant of the classifying space of $G$ and was introduced in \cite{ACT12}. Similarly as in the theory of principal $G$-bundles, there is universal bundle $E(2,G)\to B(2,G)$ in the sense of \cite[Theorem 2.2]{AG15},  where the total space $E(2,G)$ carries all the failure of $G$ to be commutative, up to homotopy. For instance, when $G$ is a compact Lie group, $E(2,G)$ is a contractible space if and only if $G$ is abelian (see \cite{AGV21}). It is then natural to try to describe the homotopy type of $E(2,G)$ for non-abelian $G$. Our main application of Theorem \ref{main} is the following.

\begin{proposition}\label{prop: app}
    Let $G$ be a compact Lie group in which $G_0$ is abelian. Then up to homotopy, the derived subgroup $[G,G]$ splits off from the loop space $\Omega E(2,G)$.
\end{proposition}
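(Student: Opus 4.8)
The plan is to analyze $\Omega E(2,G)$ through the homotopy fibre sequence $E(2,G)\to B(2,G)\xrightarrow{i}BG$ provided by the universal bundle of \cite{AG15}, which realizes $E(2,G)\simeq\hofib(i)$. Looping it and identifying $\Omega BG\simeq G$ yields a fibration $\Omega E(2,G)\to\Omega B(2,G)\xrightarrow{\Omega i}G$ of grouplike spaces. The first step is to split off the $G$ factor. The inclusion of $1$-simplices $G=\Hom(\Z,G)\hookrightarrow B(2,G)$ is adjoint to a map $s\colon G\to\Omega B(2,G)$; since $i$ is the identity on $1$-simplices and $G\to\Omega BG$ is the standard equivalence, we obtain $\Omega i\circ s\simeq\id_G$. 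Hence $\Omega i$ is a homotopy-split surjection of $H$-spaces, and $\Omega B(2,G)\simeq\Omega E(2,G)\times G$, so in particular $\Omega E(2,G)$ is a homotopy retract of $\Omega B(2,G)$.

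The second step is to locate $[G,G]$ inside this retract by exploiting the failure of $s$ to be multiplicative. For $g,h\in G$ the loop $\ell_{g,h}=s(g)\,s(h)\,s(gh)^{-1}$ satisfies $\Omega i(\ell_{g,h})=1$, so it lies in $\Omega E(2,G)$; these loops vanish on commuting pairs, where the relevant $2$-simplex is present in $B(2,G)$, and thus record the commutativity defects of $G$. Because $\Omega i$ is an $H$-homomorphism, the group commutator $[s(g),s(h)]$ formed in $\Omega B(2,G)$ is sent by $\Omega i$ to the genuine commutator $[g,h]$. Writing $[s(g),s(h)]$ in the coordinates of the splitting $\Omega B(2,G)\simeq\Omega E(2,G)\times G$, its $G$-coordinate is $[g,h]$ while its $\Omega E(2,G)$-coordinate defines a map $\kappa\colon G\times G\to\Omega E(2,G)$ that refines the commutator map $G\times G\to[G,G]$.

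Finally, write $T=G_0$ and $\Gamma=\pi_0(G)$; since $T$ is abelian, $[G,G]_0$ is a subtorus of $T$, and this is exactly where Theorem~\ref{main} enters. It guarantees that every $x\in[G,G]_0$ is a single commutator $[g,h]$, so that $\kappa$ attains every point of the intended $[G,G]$-factor; assembling such representatives over the identity component and over the finitely many components indexed by $\Gamma$ should produce a continuous section $j\colon[G,G]\to\Omega E(2,G)$ of a retraction $r\colon\Omega E(2,G)\to[G,G]$ that reads off the commutator class carried by a loop via the monodromy of $E(2,G)\to B(2,G)$. Once $r\circ j\simeq\id_{[G,G]}$ is verified, $[G,G]$ is a homotopy retract of $\Omega E(2,G)$, and the grouplike $H$-structure promotes this to a product splitting $\Omega E(2,G)\simeq[G,G]\times Y$. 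I expect the main obstacle to be precisely this last step: converting the pointwise conclusion of Theorem~\ref{main} into a well-defined, continuous choice of commutators and proving that it splits $r$; this is where both the abelianness of $G_0$ and the Goto-type rigidity behind Theorem~\ref{main} are used decisively.
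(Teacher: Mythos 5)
Your setup is on target: you correctly reduce to the case where $G_0$ is a torus, you identify the fibration $E(2,G)\to B(2,G)\to BG$ and the splitting $\Omega B(2,G)\simeq G\times\Omega E(2,G)$ from \cite{ACT12}, and you correctly see that Theorem~\ref{main} enters by guaranteeing that every element of $[G,G]_0$ is a single commutator. This is the same skeleton as the paper's argument. But the decisive step is missing, and you say so yourself: you never actually produce the retraction $r\colon\Omega E(2,G)\to[G,G]$ nor the section $j$, and your proposed strategy for $j$ --- ``assembling representatives'' $(g,h)$ with $[g,h]=x$ into a continuous section of the commutator map --- would not work as stated. Knowing that the commutator map $G\times G\to[G,G]_0$ is surjective does not give a continuous (let alone global) choice of preimages; the fibers of the commutator map vary wildly and it is not a fibration, so pointwise surjectivity cannot be upgraded to a continuous section by hand. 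This is exactly the hard analytic-to-homotopical conversion your sketch defers, and it is the whole content of the splitting.

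The paper sidesteps this by quoting two external inputs. First, the retraction is the looped commutator map $\Omega\mathfrak{c}\colon\Omega E(2,G)\to[G,G]$, where $\mathfrak{c}\colon E(2,G)\to B[G,G]$ is the ``higher commutator map'' constructed in \cite[Section 3]{AGV21}; this is a genuine construction, not just monodromy bookkeeping. Second, \cite[Corollary 3]{Villarreal2023} proves that for $G$ an extension of a finite group by a torus, $\Omega\mathfrak{c}$ splits up to homotopy \emph{provided} $[G,G]_0$ consists of single commutators --- that reduction is carried out there by homotopy-theoretic means (working at the level of $\pi_0$, $\pi_1$ and the $H$-space structure), precisely to avoid the continuous-choice-of-commutators problem. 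The paper's proof of Proposition~\ref{prop: app} is then one line: Theorem~\ref{thm 1} verifies the hypothesis of that corollary. So your proposal identifies the right ingredients but leaves open the one step that requires either citing \cite{Villarreal2023} or reproving its splitting criterion; as written, the proof is incomplete.
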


\noindent \textbf{Acknowledgements:} This project was initiated at CIMAT M\'erida’s Algebraic Topology Seminar, in the modality of `mesas de trabajo' that took place in the period January-June 2023.

\section{Preliminaries}

In this note, we will adopt the following convention. Let $g,h$ be elements in a topological group $G$. We write $[g,h]$ to denote the commutator $g^{-1}h^{-1}gh$. As mentioned before we write $G_0$ to denote the path-connected component of the identity of $G$. Recall that the commutator subgroup of a compact Lie group is closed \cite[Theorem 6.11]{HofMor13}, and then coincides with the algebraic commutator subgroup. 

\begin{definition}\label{commutator length}
    Let $G$ be a topological group, and let $g\in [G,G]$. We define the \textit{commutator length} of $g$ as the minimum number of commutators $[g_1,h_1],\ldots,[g_n,h_n]$ such that $g=[g_1,h_1]\cdot\ldots\cdot[g_n,h_n]$, with $g_i,h_i\in G$, and we denote this number by $\mathrm{cl}(g)$.  
    The \textit{commutator length} $\mathrm{cl}(G)$ of $G$ is the supremum in $\N\cup\{\infty\}$ of the set $\{\mathrm{cl}(g)\mid g\in[G,G]\}$, and similarly the \textit{connected commutator length} $\mathrm{cl}(G)_0$ of $G$ is the the supremum of $\{\mathrm{cl}(g)\mid g\in[G,G]_0\}$. 
\end{definition}

Note that under this definition the commutator length of the identity element $1 \in G$ is $\mathrm{cl}(1) = 1$, hence if $G$ is an abelian group then $\mathrm{cl}(G)_0 = \mathrm{cl}(G) = 1$.

\begin{remark}
There is no reason to expect the commutator length of an arbitrary group to be finite, we refer to \cite{Cal08} for some examples of groups with infinite commutator length.  
However, if $G$ is a compact Lie group, then $\mathrm{cl}(G)$ is finite. For example, it follows by Theorem \ref{main} and the fact that $[G,G] = [G,G]_0[F,F]$ for some finite subgroup $F\subset G$ (see Remark \ref{reduccion de extension arbitraria a producto semidirecto} bellow). 
\end{remark}

It is worth highlighting that computing the commutator length of a group is not an easy task, not even for finite groups. In fact, there are some famous conjectures about this invariant. For instance, O. Ore \cite{Ore51} conjectured that finite simple groups have commutator length 1, and D. Dokovik \cite{Dok86} conjectured that simple real Lie groups have commutator length 1. 

\begin{remark}\label{reduccion de extension arbitraria a producto semidirecto}
    Let $G$ be a compact Lie group. Recall that $G$ is the quotient of a semidirect product $\Gamma$ of a connected compact Lie group by a finite subgroup, that is, it has the form $G=(G_0\rtimes F)/H$ where $G_0$ denotes the connected component of the identity, $F$ is finite group and $H$ is a common finite subgroup that is central in $G_0$ but not necessarily in $F$ (see \cite[Lemma 5.1, footnote p. 152]{BorelSerre64}). Let $p\colon \Gamma\to G$ the quotient map. Note that the image of $p|_{[\Gamma,\Gamma]}$ is precisely $[G,G]$. Moreover, it maps $[\Gamma,\Gamma]_0$ to $[G,G]_0$. In particular $\mathrm{cl}(G)\leq \mathrm{cl}(\Gamma)$ and $\mathrm{cl}(G)_0\leq \mathrm{cl}(\Gamma)_0$.
\end{remark}

\section{Extensions of finite groups by tori}

In this section we show that the connected commutator length of an extension of a finite group by a torus is at most 1. In the last section we will use Remark \ref{reduccion de extension arbitraria a producto semidirecto} to extend this result to compact Lie groups. 

First we identify the connected component of the identity of $[G,G]$ when $G$ is a semidirect product of a finite group by a torus. 

\begin{lemma}\label{lemma 1}
Let $G=T\rtimes Q$ be a semidirect product of a torus $T$ by a finite group $Q$. Then $[G,G]_0$ agrees with $[Q,T]$. 
\end{lemma}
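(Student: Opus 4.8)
The plan is to prove the two inclusions $[Q,T]\subseteq [G,G]_0$ and $[G,G]_0\subseteq [Q,T]$, after first pinning down the structure of $[Q,T]$ as a subgroup of $T$. I would begin by showing that $[Q,T]$ is a subtorus of $T$ that is normal in $G$. For a fixed $q\in Q$, the map $T\to T$, $t\mapsto [q,t]$, is a continuous homomorphism: since $T$ is abelian one has $[q,t_1t_2]=[q,t_1][q,t_2]$. Hence its image $S_q$ is a compact connected subgroup, i.e.\ a subtorus. As $T$ is abelian, the product $\prod_{q\in Q}S_q$ is again a subgroup, and being the continuous image of the compact connected group $\prod_{q\in Q}S_q$ it is a subtorus; this product is precisely $[Q,T]$. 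To get $Q$-invariance (whence normality in $G$, since $T$ automatically normalizes subgroups of the abelian $T$), I would pass to the integral lattice, write $A_q$ for the matrix of the $Q$-action, note $[Q,T]$ corresponds to $\sum_q \im(A_q-I)$, and use the identity $A_{q'}(A_q-I)=(A_{q'q}-I)-(A_{q'}-I)$ to see that each $A_{q'}$ preserves this sum.

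With $[Q,T]$ identified as a normal subtorus, the inclusion $[Q,T]\subseteq [G,G]_0$ is immediate: $[Q,T]$ is connected, contains the identity, and lies in $[G,G]$, so it lies in the identity component $[G,G]_0$. For the reverse inclusion I would argue in two steps. First, the projection $G\to Q$ onto the finite discrete group $Q$ carries the connected set $[G,G]_0$ to the single point $1$, so $[G,G]_0\subseteq T$, and therefore $[G,G]_0\subseteq [G,G]\cap T$. Second, I would show $[G,G]\cap T\subseteq [Q,T]$ by passing to the quotient $\bar G:=G/[Q,T]=(T/[Q,T])\rtimes Q$. By construction every commutator $[q,t]$ lies in $[Q,T]$, so in $\bar G$ the images of $Q$ and of $\bar T:=T/[Q,T]$ commute, forcing $\bar G=\bar T\times Q$. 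Consequently $[\bar G,\bar G]=[Q,Q]$ sits in the $Q$-factor and meets $\bar T$ trivially; since the quotient map sends $[G,G]\cap T$ into $[\bar G,\bar G]\cap\bar T=\{1\}$, every element of $[G,G]\cap T$ lies in the kernel $[Q,T]$. Chaining the inclusions $[Q,T]\subseteq [G,G]_0\subseteq [G,G]\cap T\subseteq [Q,T]$ yields equality throughout.

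I expect the only genuinely technical point to be the first paragraph, namely verifying that $[Q,T]$ is a closed, connected, $Q$-invariant subgroup of $T$. Once $[Q,T]$ is known to be a normal subtorus on which $Q$ acts trivially modulo it, the direct-product splitting of $\bar G$ makes the remaining steps formal.
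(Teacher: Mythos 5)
Your proof is correct, but it takes a more self-contained route than the paper's. The paper's argument is essentially two lines: it invokes the standard decomposition $[G,G]=[Q,Q][Q,T][T,T]=[Q,Q][Q,T]$ for a semidirect product with abelian kernel, notes $[G,G]_0\subseteq T$, and concludes that in any expression $g=rx$ with $g\in[G,G]_0$, $r\in[Q,Q]$ and $x\in[Q,T]\subseteq T$, the element $r$ lies in $Q\cap T=\{1\}$; the reverse inclusion $[Q,T]\subseteq[G,G]_0$ --- which requires knowing that $[Q,T]$ is connected --- is left implicit in ``the result follows.'' You instead prove both inclusions from scratch: your first paragraph establishes that $[Q,T]$ is a subtorus of $T$, which is precisely the connectedness input the paper suppresses, and your quotient argument $G/[Q,T]\cong (T/[Q,T])\times Q$ is in effect a proof of the containment $[G,G]\subseteq[Q,Q][Q,T]$ that the paper simply quotes as standard. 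So the two proofs rest on the same structural facts, but yours verifies them rather than citing or omitting them; the cost is length, the benefit is that nothing is delegated to the reader. One simplification available to you: normality of $[Q,T]$ in $G$ does not require passing to the lattice --- conjugation by $q'\in Q$ sends the generator $[q,t]$ to $[q'^{-1}qq',\,q'^{-1}tq']$, which is again a generator, and $T$ centralizes $[Q,T]$ because $T$ is abelian; the lattice identity is then only needed if one wants the finer statement about the $Q$-action on the integral lattice, which the lemma does not ask for.
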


\begin{proof}
We have that $$[G,G]=[Q,Q][Q,T][T,T]=[Q,Q][Q,T]\,.$$ Let $g\in [G,G]_0$. By the previous line, we can write $g$ as $rx$ for $r\in [Q,Q]$ and $x\in [Q,T]$. Note that $[G,G]_0\subseteq T$, and since $x\in [Q,T]\subseteq T$, we deduce that $r$ must lie in $T$, hence $r$ is trivial and the result follows.  
\end{proof}

\begin{theorem}\label{thm 1}
Let $G$ be an extension of a finite group $Q$ by a torus $T$. Then 
\[\mathrm{cl}(G)_0 = 1\,.\] 
\end{theorem}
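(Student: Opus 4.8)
The plan is to reduce to a split extension and then to understand the image of the commutator map on a single subtorus. First I would invoke Remark~\ref{reduccion de extension arbitraria a producto semidirecto}: since $G$ has identity component $G_0=T$, it is a quotient $G=(T\rtimes F)/H$ of a semidirect product $\Gamma=T\rtimes F$ of the torus $T$ by a finite group $F$, and the quotient map sends commutators to commutators and $[\Gamma,\Gamma]_0$ onto $[G,G]_0$. Hence $\mathrm{cl}(G)_0\le\mathrm{cl}(\Gamma)_0$, and it suffices to treat a semidirect product $\Gamma=T\rtimes F$. By Lemma~\ref{lemma 1}, $[\Gamma,\Gamma]_0=[F,T]$, which is a subtorus $S\subseteq T$. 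So the whole statement reduces to showing that every element of $S$ is a single commutator in $T\rtimes F$.

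Next I would compute the commutator map explicitly. Writing $T$ additively with the $F$-action $q\cdot t$, an element $g$ with image $q$ gives $[g,t]=(1-q^{-1})t$ for $t\in T$, so $\{[g,t]:t\in T\}$ is exactly the subtorus $(1-q)T$. More generally, for $g,h$ whose images $p,r$ commute (which is forced once $[g,h]$ is required to lie in $T$), a direct calculation gives $[g,h]=p^{-1}(r^{-1}-1)t_1+r^{-1}(1-p^{-1})t_2$, where $t_1,t_2$ are the torus-parts of $g,h$. Since each $(1-q)T$ is $F$-invariant, as $t_1,t_2$ range over $T$ the image of a single commutator is precisely the subtorus $(1-p)T+(1-r)T$. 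Because $S=\sum_{q\in F}(1-q)T$, this reduces the theorem to realizing an arbitrary $s\in S$ by one commuting pair $p,r$. The natural attack is to pass to the Lie algebra $\mathfrak{s}=\mathrm{Lie}(S)$, decompose it into $F$-isotypic components, and choose group elements whose action has no nonzero fixed vector on as much of $\mathfrak{s}$ as possible (equivalently, so that $1$ is not an eigenvalue), so that $(1-p)$ is onto the corresponding subtorus.

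The main obstacle is exactly this last step: capturing the \emph{whole} subtorus $[F,T]=\sum_q(1-q)T$ as the image of a \emph{single} commutator. One commutator only sweeps out $(1-p)T+(1-r)T$, a subtorus assembled from just two elements of $F$, whereas $S$ may genuinely require more than two elements of $F$ to generate (for instance when $\pi_0(G)$ is not $2$-generated). Since a torus is never a finite union of proper subtori, the merging of the per-generator subtori into one commutator cannot be purely formal, and this is where I expect all the real content to lie: the delicate point is to exploit the remaining freedom in the choice of lifts and in the $F$-module structure of $T$ to absorb several contributions into a single commutator, rather than the reduction of Remark~\ref{reduccion de extension arbitraria a producto semidirecto} or the routine commutator bookkeeping.
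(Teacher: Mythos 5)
Your reduction via Remark~\ref{reduccion de extension arbitraria a producto semidirecto}, your use of Lemma~\ref{lemma 1}, and your formula for a commutator whose entries have commuting images $p,r\in F$ are all correct, and they pin the problem down precisely: a commutator $[g,h]$ lies in $T$ only if the images $p,r$ commute, and then it lies in (and can be any element of) the subtorus $(1-p)T+(1-r)T$. Since there are finitely many such pairs and a torus is never a finite union of proper subtori, Theorem~\ref{thm 1} is \emph{equivalent} to the existence of a single commuting pair $p,r$ with $(1-p)T+(1-r)T=\sum_{q\in F}(1-q)T$. But the step you leave open is not a gap that can be closed: the two-generation obstruction you describe actually occurs, and it kills the statement. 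Take $F=(\mathbb{Z}/2)^3$ and $T=\mathbb{R}^7/\mathbb{Z}^7$, with coordinates indexed by the seven nontrivial characters $\chi$ of $F$, where $q\in F$ acts on the $\chi$-coordinate by the sign $\chi(q)$. No trivial summand occurs, so $[\Gamma,\Gamma]_0=\sum_q(1-q)T=T$ for $\Gamma=T\rtimes F$. Yet any two elements $p,r\in F$ generate a subgroup of order at most $4$, hence contained in $\ker\chi_0$ for some character $\chi_0$ occurring in $T$; on that coordinate both $p$ and $r$ act trivially, so the $\chi_0$-coordinate of $[g,h]$ vanishes for \emph{all} lifts $g,h$. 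Thus every single commutator lies in one of finitely many proper subtori, and an element all of whose coordinates are nonzero, such as $(1/3,\dots,1/3)$, is not a commutator, so $\mathrm{cl}(\Gamma)_0\geq 2$ (in fact $=2$). Theorem~\ref{thm 1}, and with it Theorem~\ref{main}, is false as stated; your proposal stalls exactly at the point where any proof must fail.

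It is worth comparing this with the paper's own argument, which tries to close the same gap by a torsion/density argument: its key ``claim'' is that any $x=r[q,t]\in[G,G]_0$ with $r$ torsion equals $[q,t']$ for some $t'$, from which it concludes that commutators of the special form $[q,t]$, $q\in Q$, $t\in T$, exhaust $[G,G]_0$. By your finite-union observation this would force $[G,G]_0=(1-q_0)T$ for a single $q_0$, which already fails in a smaller example where the theorem itself is still true: $(\mathbb{Z}/2)^2$ acting on $(S^1)^3$ through its three nontrivial characters gives $[G,G]_0=T$ of dimension $3$, while each $[q,T]$ has dimension $2$ (here $(1-a)T+(1-b)T=T$, so two distinct reflections still suffice). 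The precise flaw in the paper's claim is the inference that the one-parameter subgroup $S$ containing $x$ must be covered by $[q,-]$: from $x^N=[q,t^N]\in[q,T]$ one cannot conclude $x\in[q,T]$. So your diagnosis is accurate on both counts: the reduction and commutator bookkeeping are the routine part, the covering step is the real content, and that step cannot be carried out---not by your isotypic-decomposition strategy, not by the paper's density argument, nor by any other means.
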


\begin{proof}
We will first prove the case when $G$ is a semidirect product. The general case will follow by Remark \ref{reduccion de extension arbitraria a producto semidirecto}.  Assume that $G$ a semidirect product of $Q$ by $T$.  
Let $TT$ denote the subset consisting of all torsion elements of $T$. Let $x=r[q,t]$  be an element in $[G,G]_0$ with $q\in Q$ and $t\in T$. We claim that if $r\in TT$, then there is a $t'\in T$ such that $x=[q,t']$. Moreover, it is enough to prove this claim for some choice of $t$. Indeed, for every $p\in Q$ we have a continuous group homomorphism $$[p,-]\colon T\to T\,,$$ hence the equality  $r [q,t] = [q,t']$ implies $r = [q,t^{-1}t']$, and with this expression one can readily verify the claim for an arbitrary $t$. Let us assume then that $t$ has infinite order.  Consider a 1-parameter subgroup $S$ of $[G,G]_0$ containing $r[q,t]$. Then $[q,t^N]=(r[q,t])^N$ is in $S$, where $N$ denotes the order of $r$. Since $[q,t^N]$ is of infinite order, it follows that $[q,-]$ must cover $S$ and the claim follows.

On the other hand, let $m,n$ be non-negative integers with $m\leq n$. Define $$P(m,n)=\{[q_1,t_1]\cdot\ldots\cdot[q_n,t_n]\mid q_i\in Q, t_1,\dots t_m \in TT, t_{m+1},\ldots t_n\in T \}\,. $$ 
Note that $P(0,n)$ is a closed subset of $[G,G]_0$ for every $n\geq 1$. In fact, $P(0,|Q|)=[Q,T]=[G,G]_0$ by Lemma \ref{lemma 1}. We have the following inclusion of sets $$P(|Q|-1,|Q|)\subset P(0,|Q|)=[G,G]_0\,.$$ 

Moreover, if $t\in TT$, then $[q,t]\in TT$ for any $q\in Q$. Therefore any element of $P(|Q|-1,|Q|) $ can be written as $r[q,t]$ for some $r\in TT$, $q\in Q$ and $t\in T$. Hence by the previous claim we obtain that $P(|Q|-1,|Q|)\subset P(0,1)$.

Finally, note that the closure of $P(|Q|-1,|Q|)$ must be contained in $P(0,1)$ since the latter is closed. Recall that $TT$ is a dense subspace of $T$. A standard argument of dense subspaces and surjective continuous maps can be used to show that $P(|Q|-1,|Q|)$ is a dense subspace in $[G,G]_0$. Hence $P(0,1)=[G,G]_0$.
\end{proof}

\section{A homotopy splitting of the classifying space for commutativity for extensions of finite groups by tori}\label{aplicaciones}


In this section we briefly describe an application and our main motivation to study the connected commutator length, $\mathrm{cl}(G)_0$, for extensions of finite groups by tori.

 A classical result in the homotopy theory of classifying spaces is that the loop space $\Omega BG$ of the classifying space $BG$ of a topological group $G$, is homotopy equivalent to $G$. It has been explored what the analogue result is for a variant of $BG$ called the classifying space for commutativity, denoted $B(2,G)$, which was introduced by A. Adem, F. Cohen and E. Torres-Giese. A model for $B(2,G)$ is the geometric realization of a simplicial space $B_\bullet (2,G)$, where $B_k(2,G)$ is the space of ordered commuting $k$-tuples of $G$ (see \cite{ACT12}). There is a natural map $B(2,G)\to BG$ induced by the inclusions $B_k(2,G)\hookrightarrow G^k$, along which the universal bundle $EG\to BG$ pulls back to a principal $G$-bundle $E(2,G)\to B(2,G)$. Then if $G$ is a Lie group there is a homotopy equivalence 
 \[G\times \Omega E(2,G)\simeq \Omega B(2,G)\] 
 (see \cite[Theorem 6.3 and Remark]{ACT12}). Hence to give a more precise answer on what the homotopy type of $\Omega B(2,G)$ is, we should further study the loop space $\Omega E(2,G)$. 

There is a map $\mathfrak{c}\colon E(2,G)\to B[G,G]$ that can be seen as a \emph{higher} version of the algebraic commutator map $G\times G\to [G,G]$, which may be illustrated by the fact that the restriction of $\mathfrak{c}$ to the 1-skeleton filtration is the suspension of the reduced algebraic commutator $\Sigma G\wedge G\to \Sigma[G,G]$ (see \cite[Section 3]{AGV21} for more details). The map $\mathfrak{c}$ is simply called commutator map. In \cite[Question 21]{AGV21}) the authors asked if for every compact Lie group $G$, the looped commutator map $\Omega\mathfrak{c}\colon \Omega E(2,G)\to [G,G]$ splits, up to homotopy. Theorem \ref{thm 1} together with some previous work gives a positive answer for compact Lie groups in which the path connected component of the identity is abelian. The following corollary is a stronger version of what was stated in Proposition \ref{prop: app}.

\begin{corollary}\label{aplicacion del resultado principal}
    Let $G$ be an extension of a finite group by a torus. Then the looped commutator map $\Omega\mathfrak{c}\colon \Omega E(2,G) \to [G,G]$ splits, up to homotopy.  
\end{corollary}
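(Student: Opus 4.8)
The plan is to produce a homotopy section $s\colon [G,G]\to \Omega E(2,G)$ of the looped commutator map, i.e.\ a map with $\Omega\mathfrak{c}\circ s\simeq \mathrm{id}_{[G,G]}$. Since source and target are group-like $H$-spaces and $\Omega\mathfrak{c}$ is a loop map, hence an $H$-map, it is natural to build the section layer by layer on the two pieces of $[G,G]$: its identity component $[G,G]_0$ and its finite group of components $\pi_0([G,G])$, and then splice. By Lemma \ref{lemma 1} the identity component is exactly $[Q,T]$, and by Theorem \ref{thm 1} (equivalently Theorem \ref{main}) every element of $[G,G]_0$ is a single commutator; this last statement is the input the construction is meant to consume. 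As usual I would first treat the semidirect-product case $G=T\rtimes Q$ and then reduce the general extension to it via Remark \ref{reduccion de extension arbitraria a producto semidirecto}.

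First I would treat the identity component. By the loop–suspension adjunction, a candidate map $[G,G]_0\to\Omega E(2,G)$ is the same datum as a map $\Sigma[G,G]_0\to E(2,G)$, which I would construct through the low skeleta of the simplicial model $B_\bullet(2,G)$ of \cite{ACT12}. The identification of $\mathfrak{c}$ on the $1$-skeleton with the suspension of the reduced algebraic commutator $\Sigma(G\wedge G)\to\Sigma[G,G]$ (see \cite[Section 3]{AGV21}) turns a continuous family of commutators $[q,-]\colon T\to [G,G]_0$ into a map $T\to\Omega E(2,G)$ lifting that family along $\Omega\mathfrak{c}$; the $H$-space multiplication of $\Omega E(2,G)$ then lets me assemble products of such families, matching the fact that $[Q,T]$ is generated by the homomorphisms $[q,-]$. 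The role of Theorem \ref{thm 1} is to guarantee that these commutator families cover all of $[G,G]_0$ up to homotopy, so that the assembled map is the desired section over the identity component; at the level of homotopy groups I would invoke the computation of $\pi_2(B(2,G))$, and hence of $\pi_2(E(2,G))$, from \cite{Villarreal2023} to certify that $\Omega\mathfrak{c}$ is split surjective on $\pi_1$ under the hypothesis $\mathrm{cl}(G)_0=1$.

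Next I would handle the component group. Since $\pi_0([G,G])$ is finite and represented by commutators of elements of the finite quotient $Q$, I would lift a fixed commutator presentation of each class into $\pi_0(\Omega E(2,G))=\pi_1(E(2,G))$ and verify that these partial sections are compatible with the $H$-space structure, so that the section over $[G,G]_0$ and the section over the components splice into a global homotopy section of $\Omega\mathfrak{c}$. The bookkeeping here is routine once the identity-component case is in place.

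I expect the crux to be the $\pi_1$-level realization over the identity component. The subtlety is that, although Theorem \ref{thm 1} asserts that \emph{every} element of $[G,G]_0$ is a single commutator, the map built from the $1$-skeleton alone realizes the generators of $\pi_1([G,G]_0)$ only up to a nonzero multiple, because the reduced commutator map $G\wedge G\to[G,G]_0$ need not be $\pi_1$-surjective: already for $G=O(2)$ every commutator is an ``even'' rotation, so that map has $\pi_1$-image of index two in $\pi_1(SO(2))=\Z$. The missing classes must therefore be produced from the higher simplicial data of $B_\bullet(2,G)$, i.e.\ from genuinely commuting pairs rather than from bare commutators. Reconciling the single-commutator statement of Theorem \ref{thm 1} with this higher input, through the second-homotopy computation of \cite{Villarreal2023}, is where I expect the real work to lie.
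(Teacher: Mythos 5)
Your proposal correctly isolates the one piece of input that this paper actually supplies --- namely that Theorem \ref{thm 1} guarantees every element of $[G,G]_0$ is a single commutator --- but it then sets out to rebuild, from the skeletal filtration of $B_\bullet(2,G)$, the entire homotopy-theoretic mechanism that converts this algebraic fact into a section of $\Omega\mathfrak{c}$, and it does not finish that construction. You say explicitly that reconciling the single-commutator statement with the higher simplicial data ``is where I expect the real work to lie''; that work is precisely the content of the proof, and leaving it as an expectation is a genuine gap. Indeed, your own $O(2)$ example shows that the $1$-skeleton (the suspension of the reduced commutator map) cannot realize all of $\pi_1([G,G]_0)$, so a section cannot be assembled from the commutator families $[q,-]\colon T\to[G,G]_0$ alone, and your sketch offers no concrete mechanism for producing the missing classes from the $2$-simplices (genuinely commuting pairs). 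The splicing of the identity component with $\pi_0([G,G])$ is likewise asserted rather than carried out.

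The paper's proof avoids all of this: it is a two-line citation. \cite[Corollary 3]{Villarreal2023} already states that for an extension of a finite group by a torus the looped commutator map splits up to homotopy \emph{provided} $[G,G]_0$ consists of single commutators, and the corollary follows by feeding Theorem \ref{thm 1} into that hypothesis. The homotopy theory you are trying to reconstruct --- including the computation of $\pi_2(B(2,G))$ and the passage from the algebraic surjectivity of the commutator map to a splitting on the level of spaces --- is outsourced to that earlier paper; what is new here is only the verification of its algebraic hypothesis. If you want a self-contained argument you would need to actually carry out the analysis from \cite{Villarreal2023} rather than gesture at it.
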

\begin{proof}
Under the hypothesis of the corollary, \cite[Corollary 3]{Villarreal2023} states that the looped commutator map splits, up to homotopy, as long as $[G,G]_0$ consists of single commutators,  but by Theorem \ref{thm 1} every extension of a finite group by a torus satisfies this condition, hence the result follows. 
\end{proof}

\section{Proof of Theorem \ref{main}}

Let $G$ be a compact Lie group. Recall that there is an extension of groups $$1\to G_0\to G \to \pi_0(G)\to 1\,,$$ where $\pi_0(G)$ is the group of path-connected components of $G$, which is finite. 

\begin{remark}\label{conmutadores en grupos de Lie semisimples compactos y conexos}
    Let $G$ be a  compact connected semi-simple Lie group. As mentioned before, a classical result of Goto states that $\mathrm{cl}(G)=1$. Let $T$ be a maximal torus of $G$. Recall that a theorem of Cartan asserts that any element $g\in G$ can be written as $g = h^{-1}th$ for some $h\in G$ and some $t\in T$. Moreover, since $G$ is semi-simple, if $N_G(T)$ is the normalizer of $T$, then $[N_G(T),N_G(T)]_0=T$. By Theorem \ref{thm 1}, every element of $T$ can be written as a commutator $[n,t]$ for some $t\in T$ and $n\in N_G(T)$. Hence any element of $G$ has the form $h^{-1}[n,t]h$ with $h\in G$, $n\in N_G(T)$ and $t\in T$.  
\end{remark}

Recall that Theorem \ref{main} states that for a compact Lie group $G$,  $\mathrm{cl}(x)=1$ for every $x\in [G,G]_0$. 

\begin{proof}[Proof of Theorem \ref{main}]
Let $F\subset G$ be a finite group as in Remark \ref{reduccion de extension arbitraria a producto semidirecto}.  By \cite[Lemma 5]{Villarreal2023}, we have a decomposition 
\[[G,G]_0 = [F,Z(G_0)_0][G_0,G_0]\,.\] 
 Let $x\in [G,G]_0$. We can write $x=zg$ with $z\in [F,Z(G_0)_0]$ and $g\in [G_0,G_0]$.  Consider the group extension 
 \[1\to Z(G_0)_0\to Z(G_0)_0 F\to F/(F\cap Z(G_0)_0)\to 1\,.\]
  By Theorem \ref{thm 1} we can write $z$ as a commutator $[q,z^\prime]$, for some $q\in F$ and $z^\prime\in Z(G_0)_0$. By \cite{BM55} every automorphism of finite order of $[G_0,G_0]$ has an invariant maximal torus, that is, $q^{-1}Tq=T$ for a maximal torus $T\subset [G_0,G_0]$. Now since $[G_0,G_0]$ is a compact connected semi-simple Lie group and all maximal tori in $[G_0,G_0]$ are conjugate, by Remark \ref{conmutadores en grupos de Lie semisimples compactos y conexos} we may further assume that $g=h^{-1}[n,t]h$ for some $h\in [G_0,G_0]$, $n\in N_{[G_0,G_0]}(T)$ and $t\in T$, where $N_{[G_0,G_0]}(T)$ is the normalizer of $T$.  Moreover, since $z\in Z(G_0)$ we can write $x=h^{-1}[q,z^\prime][n,t]h$. 
  
 Note that \[[q,z^\prime][n,t]\in [\langle q\rangle,Z(G_0)_0][N_{[G_0,G_0]}(T),N_{[G_0,G_0]}(T)]_0\]
 which is a subgroup of the path-connected component of the commutator subgroup $[N_{G_0}(T_0)\langle q\rangle,N_{G_0}(T_0)\langle q\rangle]$, where $T_0=Z(G_0)_0T$ is a maximal torus of $G_0$. We can now consider the group extension 
 \[1\to T_0\to N_{G_0}(T_{0}) \langle q\rangle\to Q\to 1\,,\] 
 where $Q$ is a finite group as $q$ leaves $N_{G_0}(T_0)$ invariant, as well.  Invoking again Theorem \ref{thm 1}, the element $[q,z^\prime][n,t]$ is a commutator and hence so is $x$. 
\end{proof}

We finish this note with a sufficient condition for compact Lie groups to have commutator length 1.

\begin{corollary}
Let $G$ be a compact Lie group. If the projection $G\to \pi_0(G)$ admits a section and $\pi_0(G)$ is abelian, then $\mathrm{cl}(G)=1$.
\end{corollary}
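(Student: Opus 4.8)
The plan is to reduce the statement to Theorem \ref{main} by showing that under the two hypotheses, every element of $[G,G]$ actually lies in the connected component $[G,G]_0$. Suppose $G\to \pi_0(G)$ admits a section $s\colon \pi_0(G)\to G$, and write $F = s(\pi_0(G))$ for the image, a finite subgroup mapping isomorphically onto $\pi_0(G)$. Since the section splits the extension $1\to G_0\to G\to \pi_0(G)\to 1$, we obtain $G = G_0\rtimes F$ with $F\cong \pi_0(G)$. The key observation is that because $\pi_0(G)$ is abelian, the commutator subgroup $[F,F]$ is trivial, so $F$ contributes no commutators among its own elements.

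First I would use the standard decomposition of the commutator subgroup of a semidirect product, as in the proof of Lemma \ref{lemma 1}: for $G = G_0\rtimes F$ we have
\[
[G,G] = [F,F]\,[F,G_0]\,[G_0,G_0]\,.
\]
Since $\pi_0(G)$ is abelian we have $[F,F]=1$, and since $[F,G_0]\subseteq G_0$ and $[G_0,G_0]\subseteq G_0$, every generator of $[G,G]$ already lies in $G_0$. Hence $[G,G]\subseteq G_0$, which forces $[G,G]$ to be path-connected, i.e.\ $[G,G] = [G,G]_0$.

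With this containment in hand, the conclusion is immediate: by Theorem \ref{main} every element of $[G,G]_0$ is a single commutator, so $\mathrm{cl}(x)=1$ for every $x\in[G,G] = [G,G]_0$, and therefore $\mathrm{cl}(G)=1$ by definition.

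I expect the only point requiring care, rather than a genuine obstacle, is justifying that the product decomposition of $[G,G]$ holds at the level of closed subgroups and that $[G,G]\subseteq G_0$ really does give $[G,G]=[G,G]_0$; this uses that $G_0$ is path-connected together with the fact (recalled in the Preliminaries) that the commutator subgroup of a compact Lie group is closed, so that the abstract commutator subgroup coincides with its topological closure and inherits connectedness from sitting inside $G_0$. Everything else is a direct appeal to the hypotheses and to Theorem \ref{main}.
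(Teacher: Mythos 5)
Your overall strategy is the same as the paper's: show that under the hypotheses $[G,G]=[G,G]_0$, so that Theorem \ref{main} applies directly. The decomposition $[G,G]=[F,F][F,G_0][G_0,G_0]$ with $[F,F]=1$ is the right tool. However, there is one genuine gap in the way you close the argument: you deduce that $[G,G]$ is path-connected from the containment $[G,G]\subseteq G_0$, and in the final paragraph you justify this by saying that $[G,G]$ is closed and ``inherits connectedness from sitting inside $G_0$.'' That inference is false in general: a closed subgroup of a connected Lie group need not be connected (the center $\{\pm I\}$ of $\mathrm{SU}(2)$, or any finite cyclic subgroup of $S^1$, is a closed subgroup of a connected group). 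So containment in $G_0$ alone does not give $[G,G]=[G,G]_0$.

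The fix is already contained in the decomposition you wrote down. For each $f\in F$ the set $\{[f,g]\mid g\in G_0\}$ is the image of the connected set $G_0$ under the continuous map $[f,-]$ and contains the identity; likewise $\{[g,h]\mid g,h\in G_0\}$ is a connected set containing the identity. A subgroup generated by connected subsets each containing the identity is connected, so $[F,G_0]$ and $[G_0,G_0]$ are connected subgroups, and their product $[G,G]=[F,G_0][G_0,G_0]$ is therefore connected. With that repaired, the conclusion $\mathrm{cl}(G)=\mathrm{cl}(G)_0=1$ follows from Theorem \ref{main} exactly as you say; this is precisely the (unspelled-out) content of the paper's one-line proof that ``under these hypotheses $[G,G]$ is connected.''
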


\begin{proof}
 Note that under these hypotheses $[G,G]$ is connected. Thus $\mathrm{cl}(G)_0=\mathrm{cl}(G)$. Therefore the result follows by Theorem \ref{main}. 
\end{proof}

\begin{remark}\label{Remark sobre clG mayor que uno}
For a compact Lie group $G$, we have that $\mathrm{cl}(\pi_0(G))$ is a lower bound for $\mathrm{cl}(G)$. Hence we can construct compact Lie groups whose derived subgroups do not consist only of commutators, see \cite{Isa77} for an example of a family of finite groups with this property. 
\end{remark}

\end{document}